\DeclareSymbolFont{usualmathcal}{OMS}{cmsy}{m}{n}
\DeclareSymbolFontAlphabet{\mathcal}{usualmathcal}
\DeclareMathOperator{\trace}{tr}
\DeclareMathOperator{\rank}{rank}
\newcommand{\reachab}{{\mathcal R}}
\newcommand{\lnorm}{\left\lVert}
\newcommand{\rnorm}{\right\rVert}
\newcommand{\diag}{\mathop{\rm diag}}
\renewcommand{\le}{\leq}
\newcommand{\Let}{\coloneqq}
\newcommand{\teL}{\eqqcolon}
\newcommand{\ball}{{\mathcal B}}
\def\bmat{\left[ \begin{array}}
\def\emat{\end{array} \right]}
\DeclareMathOperator{\sat}{sat}
\definecolor{mred}{rgb}{0.6, 0, 0}
\definecolor{mgreen}{rgb}{0, 0.5, 0}
\definecolor{mblue}{rgb}{0, 0, 0.5}
\definecolor{mcyan}{rgb}{0, 0.5, 0.5}
\newcommand{\R}{\ensuremath{\mathbb{R}}}
\newcommand{\N}{\ensuremath{\mathbb{N}}}
\newcommand{\Nz}{\ensuremath{\mathbb{N}_0}}
\newcommand{\lra}{\ensuremath{\longrightarrow}}
\newcommand{\fa}{\ensuremath{\forall\,}}
\renewcommand{\le}{\ensuremath{\leqslant}}
\renewcommand{\mapsto}{\ensuremath{\longmapsto}}
\newcommand{\EE}{\ensuremath{\mathsf{E}}}
\newcommand{\PP}{\ensuremath{\mathsf{P}}}
\newcommand{\norm}[1]{\ensuremath{\left\lVert #1 \right\rVert}}
\newcommand{\abs}[1]{\ensuremath{\left\lvert{#1}\right\rvert}}
\newcommand{\secref}[1]{\S\ref{#1}}
\newcommand{\transp}{\ensuremath{^{\scriptscriptstyle{\mathrm T}}}}
\newcommand{\sigalg}{\ensuremath{\mathfrak{F}}}
\newcommand{\RemarkEnd}{\hspace{\stretch{1}}{$\vartriangleleft$}}
\newcommand{\AssumptionEnd}{\hspace{\stretch{1}}{$\diamondsuit$}}
\newtheoremstyle{nonum}{4pt}{4pt}{}{}{\itshape}{.}{ }{\thmname{#1}\thmnote{ (\mdseries #3)}}
\theoremstyle{nonum}
\newtheoremstyle{nonumt}{4pt}{4pt}{\slshape}{}{\bfseries}{.}{ }{\thmname{#1}\thmnote{ (\mdseries #3)}}
\theoremstyle{nonumt}
\numberwithin{equation}{section}
\newtheoremstyle{dcstyle}{4pt}{4pt}{\slshape}{}{\bfseries}{.}{ }{}
\theoremstyle{dcstyle}
\newtheorem{theorem}[equation]{Theorem}
\newtheorem{lemma}[equation]{Lemma}
\newtheorem{proposition}[equation]{Proposition}
\theoremstyle{definition}
\theoremstyle{remark}
\newtheorem{remark}[equation]{Remark}
\newtheorem{assumption}[equation]{Assumption}
\def\tagform@#1{\maketag@@@{\ignorespaces#1\unskip\@@italiccorr}}
\numberwithin{figure}{section}
\title[Mean-square boundedness of linear systems with bounded control]{Attaining mean square boundedness of a marginally stable stochastic linear system with a bounded control input}
\author[F.~Ramponi]{Federico Ramponi}
\author[D.~Chatterjee]{Debasish Chatterjee}
\author[A.~Milias-Argeitis]{Andreas Milias-Argeitis}
\author[P.~Hokayem]{Peter Hokayem}
\author[J.~Lygeros]{John Lygeros}
\thanks{This research was partially supported by the Swiss National Science Foundation, grant 200021-122072.}
\address{Automatic Control Laboratory, ETL I28, ETH Z\"urich, Physikstrasse 3, 8092 Z\"urich, Switzerland}
\urladdr{\url{http://control.ee.ethz.ch}}
\email{\{ramponif,chatterjee,milias,hokayem,lygeros\}@control.ee.ethz.ch}
\begin{document}

\begin{abstract}
In this article we construct control policies that ensure bounded variance of a noisy marginally stable linear system in closed-loop.  It is assumed that the noise sequence is a mutually independent sequence of random vectors, enters the dynamics affinely, and has bounded fourth moment.  The magnitude of the control is required to be of the order of the first moment of the noise, and the policies we obtain are simple and computable.
\end{abstract}

\maketitle

\section{Introduction}

Stabilization of stochastic linear systems with bounded control inputs has attracted considerable attention over the years. This is due to the fact that incorporating bounds on the control is of paramount importance in practical applications; suboptimal control strategies such as receding-horizon control \citep{ref:ChaHokLyg-09,ref:HokChaLyg-09}, and rollout algorithms \citep{ref:bertsekasDP1}, among others, were designed to incorporate such constraints with relative ease, and have become widespread in applications. However, the following question remains open: \emph{when is a linear system with possibly unbounded additive stochastic noise globally stabilizable with bounded inputs?} In this article we shall provide sufficient conditions that give a positive answer to this question with minimal hypotheses.

Bounded input control has a rich and important history in the control literature \citep{ref:sussmannBoundedControls92, ref:sussmannBoundedControls94, ref:sussmannBoundedControlsFinalDT, ref:stoorvogel96, ref:stoorvogelACC07Incorrect}. The deterministic version of the bounded input stabilization problem was solved completely in a series of articles~\citep{ref:sussmannBoundedControls92, ref:sussmannBoundedControls94} culminating in~\citep{ref:sussmannBoundedControlsFinalDT}.  It was demonstrated in \citep{ref:sussmannBoundedControlsFinalDT} that global asymptotic stabilization of a discrete-time linear system
\begin{equation}
\label{e:detsys}
	x_{t+1} = A x_t + B u_t\tag{$(\star)$}
\end{equation}
with bounded feedback inputs is possible if and only if the transition matrix has spectral radius at most $1$, and the pair $(A, B)$ is stabilizable with arbitrary controls. Moreover, extensions to the output feedback case have appeared in \citep{ref:BaoLinSon-00,ref:ChiLin-03}.

In the presence of affine stochastic noise the linear system \eqref{e:detsys} becomes
$x_{t+1} = A x_t + B u_t + w_t$,
where $(w_t)_{t\in\Nz}$ is a collection of independent (but not necessarily identically distributed) random vectors in $\R^d$ with possibly inter-dependent components at each time $t$.  With an arbitrary noise it is clearly not possible to ensure mean-square boundedness; for instance, if the noise has a spherically symmetric Cauchy distribution on $\R^d$, then given any initial condition $x_0\in\R^d$, the second moment of $x_1$ does not even exist.  Similarly, if the second moment of the noise becomes unbounded with time, it is not possible to control the second moment of the process $(x_t)_{t\in\Nz}$.  It is necessary to assume, at least, that the noise has bounded variance.

Going beyond this necessary condition, it is not difficult to establish mean-square boundedness of such a system with bounded controls under the assumption that $A$ is Schur stable, i.e., all eigenvalues of $A$ are contained in the interior of the unit disk (the proof of this fact relies on standard Foster-Lyapunov techniques \citep{ref:meyn93}). However, to the best of our knowledge, there is no proof that the same can be ensured for a marginally stable linear system.  Results in this direction were reported in~\citep{ref:stoorvogelACC07Incorrect}, but to the best of our understanding \emph{conclusive} proofs of the facts reported in the present article are still missing in the literature.


%

In this article, we develop easily computable bounded control policies for the case when $A$ is marginally stable
and $(A,B)$ is stabilizable.
Our policy is not anyway stationary and is in general
chosen from the class of finite $k$-history-dependent and/or non-stationary policies. With respect to the case when $A$ is orthogonal, it turns out that if the system is reachable in one step (i.e., $\rank B = $ the dimension of the state space), we do get stationary feedback policies.  In the more general case when the system \eqref{e:detsys} is reachable in $k$ steps (with arbitrary controls), we propose a feedback policy for a sub-sampled system derived from the original one, which, for the actual system, turns out to be a $k$-history-dependent policy. In fact, in this case we realize our policy as successive concatenations of a fixed $k$-length policy.
In the most general situation
we propose a $k$-history-dependent policy, where $k$
is now the reachability index of the particular subsystem of $(A,B)$
for which the dynamics matrix is orthogonal.
In all the mentioned cases, the length of the policy
is at most equal to the dimension of the state space;
memory requirements for even the most general case are, therefore, modest.

Note that in our setting we do \emph{not} assume that the noise is white.  For our purposes the requirements on the noise are rather general, namely, the fourth moment of the noise should be uniformly bounded, and the noise vectors should be independent of each other (identical distribution at each time is not assumed).  In particular, we do \emph{not} assume Gaussian structure of the noise.  It turns out that to ensure stabilization we need the controller to be sufficiently strong, in the sense that the control input norm bound should be bigger than a uniform bound on the first moment of the noise.

Section \ref{SECTION_MAIN} contains a precise statement of our result in the most general hypotheses ($A$ marginally stable and $(A,B)$ stabilizable), and a brief sketch of the proof.  In Section \ref{SECTION_PROOF}, after some preliminary material, we prove the attainability of bounded second moment for a random walk, then we generalize the result under weaker and weaker hypotheses, finally culminating in the proof of the main theorem of Section \ref{SECTION_MAIN}. Section \ref{SECTION_EXAMPLE} presents a numerical example illustrating our results, and Section \ref{SECTION_CONCLUSION} concludes the article with a conjecture.

\section{Main result}
\label{SECTION_MAIN}

\subsection{Statement of the theorem}

Consider the discrete-time linear system
\begin{equation}
\label{STABLE_SYSTEM}
x_{t+1} = A x_t + B u_t + w_t,\qquad x_0 = x,\quad t\in\Nz,
\end{equation}
where the following hold: $x\in\R^d$ is given;
the state $x_t$ at time $t$ takes values in $\R^d$; $A\in\R^{d\times d}$, all the eigenvalues of $A$
lie in the closed unit circle, and those eigenvalues $\lambda$ such that $|\lambda|=1$
have equal algebraic and geometric multiplicities; $B\in\R^{d\times m}$, and the control $u_t$ at time $t$ takes values in $\R^m$; $(w_t)_{t\in\Nz}$ is an $\R^d$-valued random process with mean zero and $\EE\bigl[w_t w_t\transp\bigr] = Q_t$.

%
%

Our objective is to synthesize a $k$-history-dependent control policy\footnote{See~\secref{s:prelims} for definitions of policies.} $\pi = (\pi_t)_{t\in\Nz}$, consisting of successive concatenations of $k$-length sequence $\tilde\pi_{0:k-1} \Let \bigl[\tilde\pi_0, \cdots, \tilde\pi_{k-1}\bigr]$ of maps, $\tilde\pi_i:\R^{d}\lra\R^m$ for $i = 0, \ldots, k-1$, such that $\pi_t:\R^{d\times k}\lra\R^m$ is measurable, $u_t \Let \pi_t\bigl(x_t, x_{t-1},\ldots, x_{t-k+1}\bigr)$, the sequence $(u_t)_{t\in\Nz}$ is bounded, and the state of the closed-loop system
\begin{equation}
\label{e:syscl}
x_{t+1} = A x_t + B \pi_t\bigl(x_t, x_{t-1}, \ldots, x_{t-k+1}\bigr) + w_t, \qquad x_0 = x, \quad t\in\Nz,
\end{equation}
has bounded second-order moment. (To simplify the notation, we fix $x_{-k+1} = \cdots = x_{-1} = x_0$.)
The following is our main result:
\begin{theorem}
\label{MAIN_THEOREM}
Consider the system \eqref{STABLE_SYSTEM}. Suppose that
the pair $(A, B)$ is {\em stabilizable}, and that
$\sup_{t\in\Nz}\EE\bigl[\norm{w_t}^4\bigr] < \infty$.
Then there exist an $R > 0$ and a deterministic $k$-history-dependent policy
$(\pi_t)_{t\in\Nz}$,
with $k\leq d$ and $\norm{\pi_t(\cdot)} \le R$ for every $t$,
such that
\begin{enumerate}[label={\rm (P\arabic*)}, leftmargin=*, align=right]
\item for every fixed $x\in\R^d$ the process $(x_t)_{t\in\Nz}$ that solves the recursion \eqref{e:syscl}
satisfies $\sup_{t\in\Nz}\EE_x\bigl[\norm{x_t}^2\bigr] < \infty$, and
\item in the absence of the random noise the origin is asymptotically stable
for the closed-loop system.
\end{enumerate}
%
\end{theorem}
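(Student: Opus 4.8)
The plan is to reduce the problem to a random walk on the marginally stable subspace, settle that core case by a drift estimate on the \emph{norm} of the state, and then lift the conclusion back through sub-sampling and through the Schur-stable subspace. To begin the splitting, since $A$ is marginally stable and $(A,B)$ is stabilizable, a fixed real change of coordinates brings $A$ to $\diag(A_s, A_o)$, where $A_s$ is Schur stable and $A_o$ carries exactly the unit-modulus eigenvalues. Because those eigenvalues have equal algebraic and geometric multiplicities, $A_o$ is diagonalizable with spectrum on the unit circle and hence similar to an orthogonal matrix; absorbing this into the change of basis I may assume $A_o$ is \emph{orthogonal}, and stabilizability forces $(A_o,B_o)$ to be controllable. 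Boundedness of $\sup_{t}\EE_x\bigl[\norm{x_t}^2\bigr]$ is invariant under this fixed linear map, so it suffices to bound the second moment of each block. The block $z_{t+1}=A_s z_t + B_s u_t + w_t^{(s)}$ is the easy part: with $\norm{u_t}\le R$ and $\sup_t\EE\bigl[\norm{w_t}^2\bigr]<\infty$, the geometric decay $\norm{A_s^n}\le C\rho^n$ with $\rho<1$ yields $\sup_t\EE\bigl[\norm{z_t}^2\bigr]<\infty$ by a standard Foster--Lyapunov estimate, for \emph{any} bounded input. The whole difficulty therefore sits in the orthogonal block $x_{t+1}=A_o x_t + B_o u_t + w_t$.

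For the orthogonal block I would first treat the one-step-reachable case ($\rank B_o$ equal to the block dimension) and track $X_t \Let \norm{x_t}$ rather than its square. Choose the bounded stationary feedback that sets $B_o u_t = -c\,A_o x_t/\norm{A_o x_t}$ of fixed magnitude $c$ (realizable with $\norm{u_t}\le R$ via a pseudo-inverse, and saturated near the origin). Orthogonality of $A_o$ is the decisive structural fact. On the one hand, since $\norm{A_o x_t}=\norm{x_t}$, for $\norm{x_t}>c$ one has $\norm{A_o x_t + B_o u_t}=\norm{x_t}-c$, whence
\[
\EE\bigl[X_{t+1}-X_t \mid \sigalg_t\bigr]\le -c+\EE\bigl[\norm{w_t}\bigr]\le -(c-\mu),\qquad \mu\Let \sup_t\EE\bigl[\norm{w_t}\bigr],
\]
which is strictly negative outside a ball once $c>\mu$ --- precisely the requirement that the control bound dominate the first moment of the noise. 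On the other hand, orthogonality also collapses the otherwise $O(\norm{x_t})$ term out of the increment, $\bigl|X_{t+1}-X_t\bigr|\le\norm{B_o u_t + w_t}\le\norm{B_o u_t}+\norm{w_t}$, so the increments have uniformly bounded conditional third (indeed fourth) moment. A moment bound for processes with negative drift and bounded increments, of Pemantle--Rosenthal type, then delivers $\sup_t\EE\bigl[\norm{x_t}^2\bigr]<\infty$; it is essential to work with $\norm{x_t}$ and not $\norm{x_t}^2$, whose increments scale like $\norm{x_t}$ and would destroy the bounded-increment hypothesis.

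Finally I would remove the one-step assumption and assemble the pieces. If $(A_o,B_o)$ is reachable in $k$ steps, I pass to the sub-sampled chain $(x_{jk})_{j}$, governed by $x_{(j+1)k}=A_o^k x_{jk} + \mathcal{C}_k U_j + W_j$ with $A_o^k$ orthogonal, $\mathcal{C}_k\Let[\,B_o,\ A_o B_o,\ \dots,\ A_o^{k-1}B_o\,]$ of full row rank, and $U_j$ the stacked block input; full row rank lets me realize the normalizing feedback of the previous step through a bounded $U_j$, so the chain obeys the one-step analysis, and a short estimate propagates the second-moment bound to the intermediate instants $x_{jk+i}$, which differ from $x_{jk}$ only by bounded controls and finitely many noise terms. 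This realizes the policy as successive concatenations of a fixed $k$-length sequence; since $u_{jk+i}$ depends only on $x_{jk}$, which lies within the last $k$ states, it is a non-stationary $k$-history-dependent policy with $k$ at most the reachability index, hence $k\le d$. Combining the two blocks gives (P1). For (P2) I would take the bounded feedback to be a saturated \emph{linear} stabilizer, coinciding with the normalizing push for large states and reducing to a pole-placing linear gain near the origin, so that in the noise-free case the origin is asymptotically stable by the classical bounded-input stabilization theory for marginally stable systems. The main obstacle throughout is the orthogonal block: securing negative drift and bounded increments \emph{simultaneously}, which both hinge on $A_o$ being an isometry; the remaining delicacies are the Pemantle--Rosenthal moment transfer (which is why a moment of the noise beyond the second is assumed) and the bookkeeping carrying the bound from the sub-sampled chain to the intervening states.
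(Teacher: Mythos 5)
Your proposal is correct and follows essentially the same route as the paper: the same Schur-stable/orthogonal splitting via the real Jordan form, the same negative-drift-on-$\norm{x_t}$ plus Pemantle--Rosenthal argument for the orthogonal block (your direct drift computation in the original frame produces exactly the policy $-A\sat_r(\cdot)$ that the paper derives via the rotating-frame substitution $z_t=(A\transp)^t y_t$), and the same $k$-step sub-sampling through the pseudo-inverse of the reachability matrix. The only substantive divergence is (P2): the paper keeps the single formula $-\reachab_k^+\bar A\sat_r(\cdot)$ and observes that it is deadbeat inside $\ball_r$, so the noise-free state hits zero in finite time, whereas you splice in a separate pole-placing gain near the origin---both work, but the paper's version avoids having to re-verify the drift and increment conditions for a modified policy.
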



\subsection{Sketch of the proof}
Our proof is built in a series of steps, moving from simpler to progressively more complex systems. The starting point is the $d$-dimensional random walk $x_{t+1} = x_t + u_t + w_t$. In this case we employ the main result of \citep{ref:pemantle99} to design a policy that guarantees mean-square boundedness of the closed-loop system. We then consider the system $x_{t+1} = A x_t + Bu_t + w_t$, where $u_t$ is a $d$-dimensional control input, $\rank B = d$, and $A$ is orthogonal. With the help of a time-varying injective linear transformation this case is reduced to the $d$-dimensional random walk. The third case that we consider is that of the system $x_{t+1} = A x_t + B u_t + w_t$, where $u_t\in\R^m$ and $A$ is orthogonal.  This is reduced to the second case above with the aid of an injective linear transformation derived from the reachability matrix of the pair $(A, B)$ (recall that by assumption the reachability matrix has rank $d$). Finally, the general case when $A$ is just stable and $(A,B)$ stabilizable is reduced to the third case with the observation that, in view of the stability hypothesis, $A$ acts as an orthogonal map on its invariant subspace that corresponds to the eigenvalues that lie on the unit circle.

Arguments for establishing mean-square boundedness of stochastic dynamical systems typically rely on $L_1$-bounded-ness of a Lyapunov-like functional of the system. The latter can be established in at least three different ways: The first is via the classical Foster-Lyapunov drift-conditions \citep{ref:foss04,ref:meyn93} and its various refinements; the second is via excursion-theoretic analysis~\citep{ref:palExcur} that relies primarily on the existence of certain supermartingales as long as the process is outside some bounded set; the third is via martingale inequalities~\citep{ref:pemantle99}, which applies to more general scalar-valued processes than Markov processes, and in the presence of bounded controls, provides the basic machinery for establishing our Theorem \ref{MAIN_THEOREM}.

\section{Proof of the main result}
\label{SECTION_PROOF}

\subsection{Preliminaries}

        \label{s:prelims}
            Let $\Nz$ be the set of nonnegative integers $\{0, 1, 2, \ldots\}$. The standard $2$-norm on Euclidean spaces is denoted by $\norm{\cdot}$ and the absolute value on $\R$ by $\abs{\cdot}$. In a Euclidean space we denote by $\ball_r$ the closed Euclidean ball of radius $r$ centered at the origin. If $(y_t)_{t\in\Nz}$ is a random process on a probability space $(\Omega, \sigalg, \PP)$, taking values in some Euclidean space, we let $\EE_x[\varphi(y_s; s=0, 1,\ldots, t)]$ denote the conditional expectation of a measurable mapping $\varphi$ of the process up to time $t$, given the initial condition $y_0 = x$; in particular we define the $n$-th moment of $y_t$ as $\EE_x[\norm{y_t}^n]$.  We denote conditional expectation given a sub-$\sigma$-algebra $\sigalg'$ of $\sigalg$ as $\EE[\cdot\,|\,\sigalg']$. For $r > 0$ let $\sat_r:\R^d \lra \ball_r$ be defined by $\sat_r(y) \Let y$ if $y\in\ball_r$ and $\sat_r(y) \Let r y/\norm y$ otherwise. 
			Note that $\sat_r(\cdot)$ is \emph{not} the component-wise saturation function. Given matrices $A\in \R^{d\times d}$ and $B\in\R^{d\times m}$ we define the $k$-step reachability matrix $\reachab_k \Let \bmat{cccc} B & AB & \cdots & A^{k-1}B \emat$.

            We specialize the general definition of a policy~\citep[Chapter~2]{ref:hernandez-lerma1996} to our setting. A policy $\pi \Let (\pi_t)_{t\in\Nz}$ is a sequence of measurable maps $\pi_t:\R^{d\times k}\lra\R^m$ for some $k\in\N$, such that the control at time $t$ is $\pi_t\bigl(x_t, x_{t-1}, \ldots, x_{t-k+1}\bigr)$. The policy $\pi = (\pi_t)_{t\in\Nz}$ we have defined is also known as a {\em deterministic $k$-history-dependent policy} in the literature. A special case of these policies is a {\em deterministic feedback policy} or simply a {\em feedback} if $k = 1$ in the definition of a deterministic history-dependent policy. Under deterministic feedback policies the closed-loop system is Markovian~\citep[Proposition~2.3.5]{ref:hernandez-lerma1996}. A further special case is when $\pi_t = f$, a fixed measurable mapping $f:\R^d\lra\R^m$ for $t\in\Nz$; this is known as a {\em stationary feedback policy}.



\begin{lemma}
\label{LEMMA_BASE}
Let $B_1, \cdots, B_k$ be $d\times m$ matrices,
$M \Let \bmat{ccc} B_1 & \cdots & B_k \emat$,
and $\sigma_d$
denote the minimum singular value of $M$.
If $\rank M = d$, then for all $r>0$ every vector
$v \in \R^d$ belonging to $\ball_r$ can be expressed as
$v = \sum_{i=1}^k B_i u_i$, with $u_i \in \R^m$ and
$\lnorm u_i \rnorm \leq r\sigma_d^{-1}$.
In particular, if $B\in \R^{d\times d}$ and $\rank B = d$,
then every vector $v \in \R^d$ belonging to $\ball_r$ can be expressed as
$v = B u$, where $u \in \R^d$, $\lnorm u \rnorm \leq r\sigma_d^{-1}$.
\end{lemma}

\begin{proof}
$\rank M = d$ implies that $km \geq d$.
Hence,
$M = \bmat{ccc} B_1 & \cdots & B_k \emat \in \R^{d\times km}$ is a ``flat'' matrix.
Let
$
    M = USV\transp = U \bmat{cc} \Sigma & 0 \\ \emat V\transp
$
be a singular value decomposition of $M$, where $\Sigma = \diag(\sigma_1, ..., \sigma_d)$.
Since $M$ has full rank, the matrix $\Sigma$ is invertible.
Hence every vector $v \in \R^d$ can be expressed as $v = Mu$, where $u = M^+v$ and
$M^+ = V \bmat{c} \Sigma^{-1} \\0\\ \emat U\transp \in \R^{km\times d}$
is the
Moore-Penrose pseudoinverse of $M$.
Since $U, V$ are orthogonal, for any $\rho>0$ we have
$
\inf_{\lnorm u \rnorm = \rho}  \lnorm M u \rnorm
= \inf_{\lnorm V\transp u \rnorm = \rho}  \lnorm U \bmat{cc} \Sigma & 0\\ \emat V\transp u \rnorm
= \inf_{\lnorm \upsilon \rnorm = \rho}  \lnorm \Sigma \upsilon \rnorm
= \rho \sigma_d.
$
%
Hence, the image of $\ball_\rho$ under $M$ contains
$\ball_{\rho \sigma_d}$, and if we choose
$\rho = r \sigma_d^{-1}$, then the image
of $\ball_\rho$ under $M$ contains $\ball_r$.
Notice that $\sigma_d^{-1}$ is also the greatest singular value of
$M^+$, and indeed we have
$
\sup_{\lnorm v \rnorm = r}  \lnorm M^+v \rnorm
= \sup_{\lnorm U\transp v \rnorm = r}  \lnorm V \bmat{c} \Sigma^{-1} \\ 0\\ \emat U\transp v \rnorm
= \sup_{\lnorm \nu \rnorm = r}  \bigl\|\Sigma^{-1} \nu \bigr\|
= r \sigma_d^{-1}.
$
Summing up, every $v \in \ball_r$ can be expressed as
$v = Mu$, where $u\in \R^{k m}$ and $\lnorm u \rnorm \leq r\sigma_d^{-1}$.
It remains to notice that $u$ can be partitioned according to the
partition of $M$, that is
$
v = Mu =
\bmat{cccc} B_1 & B_2 & \cdots & B_k \emat
\begin{bmatrix}u_1\transp & \cdots & u_k\transp\end{bmatrix}\transp
 = \sum_{i=1}^k B_i u_i
$
and the bound $\lnorm u \rnorm \leq r\sigma_d^{-1}$ implies
$\lnorm u_i \rnorm \leq r\sigma_d^{-1}$ for all $i = 1\cdots k$.
%
\end{proof}

        \subsection{The $d$-dimensional random walk}
        \label{s:drw}
            At the core of our proof is the $d$-dimensional random walk:
            \begin{equation}
            \label{e:sysdrw}
                x_{t+1} = x_t + u_t + w_t,\qquad x_0 = x, \quad t\in\Nz,
            \end{equation}
            with the state $x_t\in\R^d$, the control $u_t\in\R^d$ with $\norm{u_t} \le r$ for some $r > 0$, the noise process $(w_t)_{t\in\Nz}$ satisfies the following assumption:

			\begin{assumption}\mbox{}
			\label{a:w}
                \begin{itemize}[label=$\diamond$, leftmargin=*]
                    \item $(w_t)_{t\in\Nz}$ are mutually independent $d$-dimensional random vectors (not necessarily identically distributed),
    				\item $\EE[w_t] = 0$, $\EE\bigl[w_tw_t\transp\bigr] = Q_t$ for all $t\in\Nz$,
                    \item there exist $C_4 > 0$ such that $\EE\bigl[\norm{w_t}^4\bigr] \le C_4$ for all $t\in\Nz$.\AssumptionEnd
                \end{itemize}
			\end{assumption}

			Let $C_1 \Let \sup_{t\in\Nz}\EE\bigl[\norm{w_t}\bigr]$; this is well-defined because by Jensen's inequality we have $C_1 \le \sqrt[4]{C_4}$. Let $(\sigalg_t)_{t\in\Nz}$ be the natural filtration of the system~\eqref{e:sysdrw}. Our proof of Theorem~\ref{MAIN_THEOREM} relies on the following (immediate) adaptation of the fundamental result \citep[Theorem~1]{ref:pemantle99}.

            \begin{proposition}
            \label{p:pemantle99}
                Let $(\xi_t)_{t\in\Nz}$ be a sequence of nonnegative random variables on some probability space $(\Omega, \sigalg, \PP)$, and let $(\sigalg_t)_{t\in\Nz}$ be any filtration to which $(\xi_t)_{t\in\Nz}$ is adapted. Suppose that there exist constants $b > 0$, and $J, M < \infty$, such that $\xi_0\le J$, and for all $t$:
                \begin{gather}
                    \EE\bigl[\xi_{t+1} - \xi_t\big|\sigalg_t\bigr] \le -b\quad \text{on the event }\{\xi_t > J\},\quad\text{and}\label{e:FLcond}\\
                    \EE\bigl[\abs{\xi_{t+1} - \xi_t}^4\big|\xi_0,\ldots, \xi_t\bigr] \le M.\label{e:pcond}
                \end{gather}
                Then there exists a constant $c = c(b, J, M) > 0$ such that $\displaystyle{\sup_{t\in\Nz}\EE\bigl[\xi_t^{2}\bigr] \le c}$.
            \end{proposition}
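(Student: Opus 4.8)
The plan is to obtain this as a specialization of the cited martingale result \citep[Theorem~1]{ref:pemantle99}, taking the exponent there to be $p = 4$. Before invoking it I would reconcile the two filtrations appearing in the hypotheses: the drift bound \eqref{e:FLcond} is asserted relative to $\sigalg_t$, whereas the increment bound \eqref{e:pcond} is relative to the natural filtration $\mathcal{G}_t \Let \sigalg(\xi_0,\ldots,\xi_t)$. Since $\mathcal{G}_t \subseteq \sigalg_t$ and the event $\{\xi_t > J\}$ is $\mathcal{G}_t$-measurable, the tower property gives $\EE\bigl[\xi_{t+1}-\xi_t\bst \mathcal{G}_t\bigr] = \EE\bigl[\EE[\xi_{t+1}-\xi_t\bst\sigalg_t]\bst\mathcal{G}_t\bigr] \le -b$ on $\{\xi_t > J\}$, so \emph{both} hypotheses hold with respect to $\mathcal{G}_t$, which is the setting of \citep{ref:pemantle99}. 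The initial condition $\xi_0 \le J$ supplies the required finiteness of the starting moment.

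With $p = 4$, the cited theorem yields $\sup_{t\in\Nz}\EE\bigl[\xi_t^{p-1}\bigr] = \sup_{t\in\Nz}\EE\bigl[\xi_t^{3}\bigr] < \infty$, with the bound depending only on $b$, $J$, and $M$. The claimed second-moment bound then follows by descending from $L^3$ to $L^2$: by Lyapunov's inequality $\EE\bigl[\xi_t^2\bigr] \le \bigl(\EE[\xi_t^3]\bigr)^{2/3}$ (equivalently, $\xi_t^2 \le 1 + \xi_t^3$ pointwise), so $c \Let \bigl(\sup_t \EE[\xi_t^3]\bigr)^{2/3}$ is the desired constant. In particular the stronger exponent $p-1 = 3$ is not needed; any $p \ge 3$ would already deliver an $L^2$ bound, but the fourth-moment hypothesis is what the noise assumptions in Theorem~\ref{MAIN_THEOREM} naturally furnish.

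Since all the analytic content is absorbed into \citep{ref:pemantle99}, the only real ``obstacle'' on this route is the bookkeeping above; it is worth recording, however, where the difficulty would reside were one to argue from scratch. Working throughout with $\mathcal{G}_t$ and using $\EE\bigl[(\xi_{t+1}-\xi_t)^2\bst\mathcal{G}_t\bigr] \le \sqrt{M}$ (Jensen), a direct Foster--Lyapunov computation with $V(\xi)=\xi^2$ gives only $\EE\bigl[\xi_{t+1}^2 - \xi_t^2 \bst \mathcal{G}_t\bigr] \le \sqrt{M} - 2b\,\xi_t$ on $\{\xi_t > J\}$, a drift that is negative but merely \emph{linear} in $\xi_t$; telescoping this controls the Ces\`aro average $\tfrac1T\sum_{t<T}\EE[\xi_t]$ rather than $\sup_t \EE[\xi_t^2]$. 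Upgrading this averaged bound to a uniform-in-$t$ one is exactly the hard part, and it is what the excursion/renewal machinery of \citep{ref:pemantle99} accomplishes: the negative drift controls the frequency and depth of excursions above a high level, while the $L^4$ increment bound controls the tail of the maximum attained within a single excursion, and together they yield a tail estimate $\Prob{\xi_t > s} = \bigO(s^{-4})$ uniform in $t$, from which $\EE[\xi_t^2] = \int_0^\infty 2s\,\Prob{\xi_t > s}\drv s$ is bounded uniformly.
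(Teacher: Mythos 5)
Your overall route coincides with the paper's: Proposition~\ref{p:pemantle99} is presented there as an ``(immediate) adaptation'' of \citep[Theorem~1]{ref:pemantle99} with no further proof, so the only content to supply is the bookkeeping, and your reconciliation of the two filtrations via the tower property (using that $\{\xi_t>J\}$ is measurable with respect to $\sigma(\xi_0,\ldots,\xi_t)\subseteq\sigalg_t$) is exactly the adaptation needed and is carried out correctly, as is the observation about the initial condition.

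There is, however, a genuine error in how you quote the conclusion of the cited theorem. Under a $p$-th moment bound on the increments with $p>2$, Pemantle and Rosenthal prove that $(\xi_t)$ is uniformly bounded in $L^r$ for every $r<p-1$, and they exhibit an example showing that this can \emph{fail} at the critical exponent $r=p-1$. Consequently, with $p=4$ you may not conclude $\sup_t\EE\bigl[\xi_t^{3}\bigr]<\infty$; that is precisely the borderline case their counterexample rules out. The repair is immediate and makes the argument shorter: take $r=2<3=p-1$, which yields $\sup_t\EE\bigl[\xi_t^{2}\bigr]\le c(b,J,M)$ directly, so the Lyapunov-inequality descent from $L^3$ to $L^2$ is not needed. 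The same off-by-one error propagates into two of your side remarks: $p=3$ gives only $r<2$, so ``any $p\ge 3$'' does \emph{not} deliver an $L^2$ bound (one needs $p>3$); and the uniform tail estimate produced by the excursion argument is $\Prob{\xi_t>s}=\bigO\bigl(s^{-(p-1)}\bigr)=\bigO\bigl(s^{-3}\bigr)$, not $\bigO\bigl(s^{-4}\bigr)$ --- which is still exactly enough to integrate $2s\,\Prob{\xi_t>s}$ and obtain a uniform second-moment bound. Your description of where the real difficulty lies (upgrading a Ces\`aro-type drift estimate to a bound uniform in $t$) is otherwise accurate.
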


            \begin{lemma}
            \label{l:FLcond}
                Consider the system \eqref{e:sysdrw}, and define $\xi_t \Let \norm{x_t}$, $t\in\Nz$. There exists a constant $b > 0$, such that for any $r > C_1$ condition~\eqref{e:FLcond} holds in closed-loop with the control $u_t = -\sat_r(x_t)$.
            \end{lemma}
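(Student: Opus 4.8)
The plan is to substitute the feedback $u_t = -\sat_r(x_t)$ directly into~\eqref{e:sysdrw} and to verify the drift inequality~\eqref{e:FLcond} on the region where the saturation is fully active, that is, where $\norm{x_t} > r$. Accordingly I would fix the threshold $J \Let r$, so that on the event $\{\xi_t > J\}$ one has $\sat_r(x_t) = r\,x_t/\norm{x_t}$ and the closed-loop recursion becomes
\[
x_{t+1} = \Bigl(1 - \tfrac{r}{\norm{x_t}}\Bigr)x_t + w_t .
\]
Since $\norm{x_t} > r$ on this event, the scalar $1 - r/\norm{x_t}$ lies in $(0,1)$, and it is this nonnegativity that keeps the estimate transparent.

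First I would observe that, the coefficient $1 - r/\norm{x_t}$ being nonnegative, the vector $\bigl(1 - r/\norm{x_t}\bigr)x_t$ has norm exactly $\norm{x_t} - r$; the triangle inequality then gives the pathwise bound $\norm{x_{t+1}} \le \bigl(\norm{x_t} - r\bigr) + \norm{w_t}$. Next I would take the conditional expectation given $\sigalg_t$. The quantity $\norm{x_t} - r$ is $\sigalg_t$-measurable and factors out, whereas $\EE\bigl[\norm{w_t}\big|\sigalg_t\bigr] = \EE\bigl[\norm{w_t}\bigr]$ because $w_t$ is independent of $\sigalg_t$ by the mutual independence postulated in Assumption~\ref{a:w}. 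Using the defining bound $\EE\bigl[\norm{w_t}\bigr] \le C_1$, this yields, on $\{\xi_t > r\}$,
\[
\EE\bigl[\xi_{t+1} - \xi_t\big|\sigalg_t\bigr] \le -r + C_1 .
\]

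Finally, since $r > C_1$ by hypothesis, I would set $b \Let r - C_1 > 0$, and the last display is exactly the asserted inequality~\eqref{e:FLcond} with threshold $J = r$. The argument is elementary, so I do not expect a genuine obstacle; the only two points that require care are (i) fixing $J = r$ so that the saturation is indeed active and the control contributes a displacement of norm precisely $r$ directed toward the origin, and (ii) invoking the independence of $w_t$ from the past in order to replace the conditional expectation of $\norm{w_t}$ by its unconditional value, which is the step through which the control over the mean noise magnitude $C_1$ enters. Conceptually the lemma simply records that a control budget $r$ strictly exceeding the mean noise magnitude produces a net inward drift of size $r - C_1$ whenever the state lies outside $\ball_r$.
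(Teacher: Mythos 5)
Your proof is correct and follows essentially the same route as the paper's: the paper likewise fixes $J = r$, uses the triangle inequality to bound $\norm{x_t - \sat_r(x_t) + w_t}$ by $\norm{x_t - \sat_r(x_t)} + \norm{w_t}$, notes that $\norm{x_t - \sat_r(x_t)} - \norm{x_t} = -r$ on $\{\norm{x_t} > r\}$, and concludes with $b = r - C_1$ via the independence of $w_t$ from $\sigalg_t$. Your explicit rewriting of the saturated dynamics as $\bigl(1 - r/\norm{x_t}\bigr)x_t + w_t$ is just a more detailed presentation of the same computation.
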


            \begin{proof}
                Fix $t\in\Nz$ and $r > C_1$. We have 
                \begin{align*}
                    \EE\bigl[\xi_{t+1} - \xi_t\big|\sigalg_t\bigr] & = \EE\bigl[\norm{x_{t+1}} - \norm{x_t}\big|\sigalg_t\bigr] = \EE\bigl[\bigl\|x_t + u_t + w_t\bigr\| - \norm{x_t}\big|\sigalg_t\bigr]\\
                    & = \EE\bigl[\norm{x_t - \sat_r(x_t) + w_t} - \norm{x_t}\big|\sigalg_t\bigr]\\
                    & \le \EE\bigl[\norm{x_t - \sat_r(x_t)} + \norm{w_t} - \norm{x_t}\big|\sigalg_t\bigr].
                \end{align*}
                Let $J = r$ and $b\Let r - C_1$. On the set $\{\norm{x_t} > J\}$ we have $\norm{x_t - \sat_r(x_t)} - \norm{x_t} = -r$. From the above we get, on the set $\{\norm{x_t} > J\}$,
                \begin{align*}
                    \EE\bigl[\xi_{t+1} - \xi_t\big|\sigalg_t\bigr] & \le \EE\bigl[\norm{x_t - \sat_r(x_t)} + \norm{w_t} - \norm{x_t}\big|\sigalg_t\bigr]\\
                    & = -r + \EE\bigl[\norm{w_t}\bigr]\\
                    & \le -b,
                \end{align*}
                where $b$ is positive by our hypothesis. The assertion follows.
            \end{proof}

            \begin{lemma}
            \label{l:pcond}
                Consider the system~\eqref{e:sysdrw} and define $\xi_t \Let \norm{x_t}$, $t\in\Nz$. Then for the closed-loop system with $u_t = -\sat_r(x_t)$ there exists a constant $M = M(C_4) > 0$ such that~\eqref{e:pcond} holds.
            \end{lemma}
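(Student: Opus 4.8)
The plan is to bound the one-step increment $\abs{\xi_{t+1} - \xi_t}$ pathwise by a quantity that depends only on $\norm{w_t}$ and the fixed saturation level $r$, and then to take conditional expectations, exploiting the independence of $w_t$ from the past. Everything reduces to controlling the first four moments of $\norm{w_t}$, all of which are dominated by $C_4$.

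First I would apply the reverse triangle inequality to the closed-loop recursion $x_{t+1} = x_t - \sat_r(x_t) + w_t$, obtaining
\[
\abs{\xi_{t+1} - \xi_t} = \abs{\norm{x_{t+1}} - \norm{x_t}} \le \norm{x_{t+1} - x_t} = \norm{w_t - \sat_r(x_t)}.
\]
Since $\norm{\sat_r(\cdot)} \le r$ by definition of the saturation map, a further application of the triangle inequality gives the deterministic pathwise bound $\abs{\xi_{t+1} - \xi_t} \le \norm{w_t} + r$, and therefore $\abs{\xi_{t+1} - \xi_t}^4 \le \bigl(\norm{w_t} + r\bigr)^4$.

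Next I would expand $\bigl(\norm{w_t}+r\bigr)^4$ by the binomial theorem and take the conditional expectation in~\eqref{e:pcond}. The key observation is that $\xi_0, \ldots, \xi_t$ are measurable functions of $x_0$ and $w_0, \ldots, w_{t-1}$, through the deterministic feedback $u_s = -\sat_r(x_s)$; hence $\sigma(\xi_0,\ldots,\xi_t)\subseteq\sigalg_t$, and by Assumption~\ref{a:w} the vector $w_t$ is independent of $\sigalg_t$. Consequently, conditioning on $\xi_0,\ldots,\xi_t$ replaces the conditional expectation of any function of $w_t$ by its unconditional expectation, and we get
\[
\EE\bigl[\abs{\xi_{t+1} - \xi_t}^4 \big| \xi_0, \ldots, \xi_t\bigr] \le \EE\bigl[\norm{w_t}^4\bigr] + 4r\,\EE\bigl[\norm{w_t}^3\bigr] + 6r^2\,\EE\bigl[\norm{w_t}^2\bigr] + 4r^3\,\EE\bigl[\norm{w_t}\bigr] + r^4.
\]

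Finally I would bound the lower-order moments in terms of $C_4$. Applying Jensen's inequality to the concave maps $s\mapsto s^{k/4}$ with the nonnegative random variable $\norm{w_t}^4$ yields $\EE\bigl[\norm{w_t}^k\bigr] \le \bigl(\EE\bigl[\norm{w_t}^4\bigr]\bigr)^{k/4} \le C_4^{k/4}$ for $k \in \{1,2,3\}$, uniformly in $t$. Substituting these estimates produces the finite, $t$-independent constant $M \Let C_4 + 4r\,C_4^{3/4} + 6r^2\,C_4^{1/2} + 4r^3\,C_4^{1/4} + r^4$ (depending on $C_4$ and the fixed $r$), which establishes~\eqref{e:pcond}. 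This is essentially a routine moment computation; the only step requiring genuine care is the independence/measurability argument that justifies dropping the conditioning, which rests crucially on the control being a deterministic function of the currently observed state.
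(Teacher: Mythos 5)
Your proposal is correct and follows essentially the same route as the paper: the reverse triangle inequality gives the pathwise bound $\abs{\xi_{t+1}-\xi_t}\le r+\norm{w_t}$, independence of $w_t$ from $\sigma(\xi_0,\ldots,\xi_t)$ removes the conditioning, and the binomial expansion with Jensen's inequality bounds the lower-order moments of $\norm{w_t}$ by powers of $C_4$. You merely spell out the expansion and the measurability argument that the paper leaves implicit, and your explicit constant $M$ also depends on the fixed $r$, exactly as in the paper.
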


            \begin{proof}
                Fix $r > C_1$. Applying the triangle inequality successively, we have
                \[ 
                    \abs{\xi_{t+1} - \xi_t}^4 = \abs{\norm{x_{t+1}} - \norm{x_t}}^4 \le \norm{x_{t+1} - x_t}^4 = \norm{u_t + w_t}^4 \le \bigl(r + \norm{w_t}\bigr)^4,
                \]
                which leads to
                \[
                    \EE\bigl[\abs{\xi_{t+1} - \xi_t}^4\,\big|\,\xi_0, \ldots, \xi_t\bigr] \le \EE\bigl[\bigl(r + \norm{w_t}\bigr)^4\big|\xi_0, \ldots, \xi_t\bigr] = \EE\bigl[\bigl(r + \norm{w_t}\bigr)^4\bigr].
                \]
                Since the fourth moment of $w_t$ is uniformly bounded, expanding the right-hand side above and applying Jensen's inequality shows that there exists some $M = M(C_4) > 0$ such that $\EE\bigl[\bigl(r + \norm{w_t}\bigr)^4\bigr] \le M$. The assertion follows.
            \end{proof}

            \begin{proposition}
            \label{PROPOSITION_RW}
                For $r > 0$ consider the system~\eqref{e:sysdrw} under the deterministic stationary feedback policy $u_t = -\sat_r(x_t)$:
                \begin{equation}
				\label{e:randomwalkclosedloop}
                    x_{t+1} = x_t -\sat_r(x_t) + w_t,\qquad x_0 = x, \quad t\in\Nz.
                \end{equation}
 				Then for every $r > C_1$ the system~\eqref{e:randomwalkclosedloop} satisfies $\sup_{t\in\Nz}\EE_x\bigl[\norm{x_t}^2\bigr] \le c$ for some $c = c(x, C_1) < \infty$.
            \end{proposition}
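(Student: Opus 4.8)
The plan is to apply Proposition~\ref{p:pemantle99} to the scalar process $\xi_t \Let \norm{x_t}$ associated with the closed loop \eqref{e:randomwalkclosedloop}, since Lemmas~\ref{l:FLcond} and~\ref{l:pcond} are designed to be exactly the two hypotheses \eqref{e:FLcond} and \eqref{e:pcond} of that proposition. First I would observe that $\xi_t = \norm{x_t} \ge 0$ is a nonnegative process adapted to the natural filtration $(\sigalg_t)_{t\in\Nz}$ of \eqref{e:randomwalkclosedloop}, so the structural requirements of Proposition~\ref{p:pemantle99} are met. Fixing $r > C_1$, Lemma~\ref{l:pcond} supplies a constant $M = M(C_4) < \infty$ for which the fourth-moment increment bound \eqref{e:pcond} holds, and Lemma~\ref{l:FLcond} supplies $b \Let r - C_1 > 0$ for which the drift condition \eqref{e:FLcond} holds on the excursion set $\{\xi_t > r\}$.

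The one point requiring care is the initial-condition threshold $J$. Lemma~\ref{l:FLcond} is phrased with $J = r$, but the given $x \in \R^d$ need not satisfy $\norm{x} \le r$, whereas Proposition~\ref{p:pemantle99} requires $\xi_0 \le J$. I would therefore set $J \Let \max\{r, \norm{x}\}$. With this choice $\xi_0 = \norm{x} \le J$ holds trivially, and since $J \ge r$ we have $\{\xi_t > J\} \subseteq \{\xi_t > r\}$; because the drift estimate of Lemma~\ref{l:FLcond} was obtained on the larger set $\{\norm{x_t} > r\}$ (where $\norm{x_t - \sat_r(x_t)} - \norm{x_t} = -r$ exactly), it continues to hold on the smaller set $\{\xi_t > J\}$. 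Thus all three hypotheses of Proposition~\ref{p:pemantle99} are verified with the explicit constants $b = r - C_1$, $J = \max\{r, \norm{x}\}$, and $M = M(C_4)$.

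Finally I would invoke Proposition~\ref{p:pemantle99} to obtain a constant $c = c(b, J, M) < \infty$ with $\sup_{t\in\Nz} \EE\bigl[\xi_t^2\bigr] \le c$. Since $\xi_t^2 = \norm{x_t}^2$ and $\xi_0 = \norm{x}$ is fixed by the initial condition, this is precisely $\sup_{t\in\Nz}\EE_x\bigl[\norm{x_t}^2\bigr] \le c$, and tracing the dependencies ($b$ through $C_1$, $J$ through $\norm{x}$, and $M$ through $C_4$) shows that $c$ may be taken as a function of $x$ and $C_1$ once $r$ and the noise bounds are fixed, as claimed. I do not anticipate any genuine obstacle: the analytic substance lives entirely in the two preceding lemmas and in Proposition~\ref{p:pemantle99}, so the only thing to get right is the bookkeeping of $J$ to accommodate an arbitrary starting point, which is the step flagged above.
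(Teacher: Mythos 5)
Your proposal is correct and follows exactly the paper's own argument: combine Lemma~\ref{l:FLcond} and Lemma~\ref{l:pcond} to verify the hypotheses of Proposition~\ref{p:pemantle99} for $\xi_t = \norm{x_t}$, taking $J = \max\{r, \norm{x}\}$ to accommodate an arbitrary initial condition. Your explicit justification that the drift bound persists on the smaller set $\{\xi_t > J\} \subseteq \{\xi_t > r\}$ is a detail the paper leaves implicit, but the route is identical.
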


            \begin{proof}
                Let $r = C_1 + b$ for some $b > 0$ and $J \Let \max\bigl\{r, \norm{x}\bigr\}$. Lemma~\ref{l:FLcond} guarantees that~\eqref{e:FLcond} holds, and Lemma~\ref{l:pcond} shows that there exists an $M > 0$ such that~\eqref{e:pcond} holds. The assertion now is an immediate consequence of Proposition~\ref{p:pemantle99}.
            \end{proof}

\subsection{The case of $A$ orthogonal}
\label{SUBSEC_ORTHOGONAL}

Next we establish part (P1) of the main theorem in the particular case of $A$ being {\em orthogonal}.

\begin{lemma}
\label{LEMMA_RANDOMWALK_POLICY}
Consider the system
$
y_{t+1} = Ay_t + u_t + w_t
$,
where $y_t$ and $u_t$ take values in $\R^d$,
$A$ is orthogonal,
and $(w_t)_{t\in\Nz}$ satisfies Assumption~{\rm \ref{a:w}}.
There exist a constant $r > 0$ and a deterministic stationary policy
$\pi = (f, f, \cdots)$ such that $\norm{f(y)} \le r$ for all $y\in\R^d$
and $t\in\Nz$,
and the closed-loop system
\begin{equation}
\label{EQ_CLOSEDLOOP_Y}
y_{t+1} = Ay_t + f(y_t) + w_t
\end{equation}
under this policy satisfies $\sup_{t\in\Nz}\EE_x\bigl[ \lnorm y_t \rnorm^2\bigr] < \infty$.
\end{lemma}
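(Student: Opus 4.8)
The plan is to reduce the closed-loop system to the $d$-dimensional random walk of \secref{s:drw} by a time-dependent orthogonal change of coordinates, and then invoke Proposition~\ref{PROPOSITION_RW}. Since $A$ is orthogonal we have $A^{-1} = A\transp$, and I would introduce the new process $z_t \Let A^{-t} y_t$. Rewriting $y_{t+1} = A y_t + u_t + w_t$ in these coordinates gives
\[
    z_{t+1} = z_t + A^{-(t+1)} u_t + A^{-(t+1)} w_t,
\]
so that with $\tilde u_t \Let A^{-(t+1)} u_t$ and $\tilde w_t \Let A^{-(t+1)} w_t$ the $z$-process is exactly a $d$-dimensional random walk $z_{t+1} = z_t + \tilde u_t + \tilde w_t$ with $z_0 = x$.

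First I would verify that the transformed noise $(\tilde w_t)_{t\in\Nz}$ still satisfies Assumption~\ref{a:w} with the same constants. Because each matrix $A^{-(t+1)}$ is deterministic and orthogonal, the vectors $\tilde w_t$ remain mutually independent and mean zero, and $\norm{\tilde w_t} = \norm{w_t}$ yields $\EE\bigl[\norm{\tilde w_t}^4\bigr] = \EE\bigl[\norm{w_t}^4\bigr] \le C_4$; in particular the associated first-moment bound $C_1$ is unchanged.

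The key step is to realize the random-walk policy $\tilde u_t = -\sat_r(z_t)$ as a \emph{stationary} feedback in the original coordinates. The crucial observation is that $\sat_r$ commutes with every orthogonal map $O$, since $\norm{Oz} = \norm{z}$ forces $\sat_r(Oz) = O\,\sat_r(z)$. Consequently, choosing the stationary feedback $f(y) \Let -A\,\sat_r(y)$ gives $\tilde u_t = A^{-(t+1)} u_t = -A^{-(t+1)} A\,\sat_r(y_t) = -A^{-t}\sat_r(A^t z_t) = -\sat_r(z_t)$, which is precisely the saturated policy of Proposition~\ref{PROPOSITION_RW}. Moreover $\norm{f(y)} = \norm{A\,\sat_r(y)} = \norm{\sat_r(y)} \le r$, so the required uniform control bound holds. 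Hence for any $r > C_1$, Proposition~\ref{PROPOSITION_RW} yields $\sup_{t\in\Nz}\EE_x\bigl[\norm{z_t}^2\bigr] < \infty$.

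Finally I would transfer the bound back: since $A^t$ is orthogonal, $\norm{y_t} = \norm{A^t z_t} = \norm{z_t}$, whence $\sup_{t\in\Nz}\EE_x\bigl[\norm{y_t}^2\bigr] = \sup_{t\in\Nz}\EE_x\bigl[\norm{z_t}^2\bigr] < \infty$, as claimed. The only delicate point is the one just highlighted: for a generic policy the time-varying transformation $z_t = A^{-t} y_t$ would produce a genuinely time-varying control law, and it is exactly the commutation of $\sat_r$ with orthogonal matrices that collapses it into the single stationary feedback $f$. Everything else is routine bookkeeping with norm-preserving maps.
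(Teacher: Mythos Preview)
Your proposal is correct and essentially identical to the paper's own proof: the same change of variables $z_t = (A\transp)^t y_t = A^{-t} y_t$, the same verification that the transformed noise still satisfies Assumption~\ref{a:w}, the same invocation of Proposition~\ref{PROPOSITION_RW}, and the same crucial use of the identity $\sat_r(Ay) = A\,\sat_r(y)$ for orthogonal $A$ to collapse the a~priori time-varying control into the stationary feedback $f(y) = -A\,\sat_r(y)$. The only cosmetic difference is that the paper first writes down the time-varying formula $u_t = -A^{t+1}\sat_r\bigl((A\transp)^t y_t\bigr)$ and then simplifies, whereas you guess $f$ first and verify.
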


\begin{proof}
Consider the process $(z_t)_{t\in\Nz}$ defined by $z_t \Let (A\transp)^t \ y_t$.
%
%
The second moment of $z_t$ is the same as that of $y_t$ due to orthogonality of $A$:
\[
\EE_x \bigl[ \lnorm z_t \rnorm^2 \bigr]
= \EE_x \bigl[ \lnorm (A\transp)^t \ y_t \rnorm^2 \bigr]
= \EE_x\bigl[ y_t\transp A^t (A\transp)^t y_t \bigr]
= \EE_x\bigl[ y_t\transp y_t\bigr]
= \EE_x \bigl[ \lnorm y_t \rnorm^2 \bigr].
\]
Now we have
\begin{equation}
\label{EQ_TRICK}
\begin{split}
z_{t+1}
&= (A\transp)^{t+1} \ y_{t+1}
= (A\transp)^{t} \ y_t + (A\transp)^{t+1} \ u_t + (A\transp)^{t+1} \ w_t = z_t + \bar{u}_t + \bar{w}_t,
\end{split}
\end{equation}
where the mapping $u_t \mapsto \bar{u}_t \Let (A\transp)^{t+1} \ u_t$ is isometric and invertible, and $(\bar w_t)_{t\in\Nz}$ defined by $\bar{w}_t \Let (A\transp)^{t+1} \ w_t$, is a sequence of zero-mean, independent (although in general {\em not} identically distributed) random vectors, with fourth moment given by
$
\EE \bigl[ \lnorm \bar{w}_t \rnorm^4 \bigr]
= \EE \bigl[ \bigl\|(A\transp)^{t+1} \ w_t \bigr\|^4 \bigr]
= \EE \bigl[ \lnorm w_t \rnorm^4 \bigr]
\le C_4.
$
Due to Proposition \ref{PROPOSITION_RW}, there exists a constant $r$ such that the closed-loop system \eqref{EQ_TRICK} under the policy $\bar u_t = -\sat_r(z_t) \teL \bar f(z_t)$ has bounded second moment. Consequently, the original system \eqref{EQ_CLOSEDLOOP_Y} has bounded second moment under the policy
%
\[
u_t = A^{t+1} \bar u_t = A^{t+1} \bar f(z_t) = -A^{t+1} \sat_r\left( (A\transp)^t \ y_t \right) \teL f_t(y_t).
\]
%
Noting that for any orthogonal matrix $A$
we have $\sat_r(Ay) = A \sat_r(y)$, we arrive at
$
u_t = f_t(y_t) = - A \sat_r (y_t) \teL f(y_t),
$
which is indeed a stationary feedback. Moreover, since $\norm{A\sat_r (y_t)}\leq r$, we have $\lnorm f(y_t) \rnorm \leq r$.
\end{proof}

In the following we will consider a nonstationary policy
obtained by successive concatenations of a $k$-length policy $(f_0, f_1, \cdots f_{k-1})$
acting on the ``sub-sampled'' process $(x_{nk})_{n\in\Nz}$. More precisely, our policy has the form
$
u_t = Bf_{t\ {\bf mod}\ k}(x_{(t \div k)k})
$
where the ``$\div$'' symbol denotes integer division and ``${\bf mod}$'' its remainder.
In words, we break the time line into segments of length $k$, and within each segment
we let the controls be given by $f_0, f_1, \cdots f_{k-1}$, applied in this order always to the
first state observed in the segment.
For example,
$x_1 = x_0 + Bf_0(x_0) + w_0$,\
$x_2 = x_1 + Bf_1(x_0) + w_1$,\
...,\
$x_k = x_{k-1} + Bf_{k-1}(x_0) + w_{k-1}$,\
$x_{k+1} = x_{k} + Bf_0(x_{k}) + w_{k}$,\
$x_{k+2} = x_{k+1} + Bf_1(x_{k}) + w_{k+1}$,\
and so on.
%

\begin{lemma}
\label{LEMMA_FINAL}
Consider the system
\begin{equation}
\label{EQ_SYSTEM_WITH_B}
x_{t+1} = Ax_t + Bu_t + w_t,
\end{equation}
where $x_t$ takes values in $\R^d$, $u_t$ takes values in $\R^m$,
$A$ is orthogonal, the pair $(A, B)$ is reachable in $k$ steps
(i.e., $\rank \reachab_k = d$, where $\reachab_k = \bmat{cccc}B & AB & \cdots & A^{k-1} B\emat$),
and $(w_t)_{t\in\Nz}$ satisfies Assumption~{\rm \ref{a:w}}.
Then there exist a constant $\rho>0$ and a policy
$\pi = (f_0, f_1, \cdots f_{k-1}, f_0, f_1, \cdots)$
such that $\lnorm f_i(x) \rnorm \leq \rho$ for all $x\in\R^d$,
and the closed-loop system
\begin{equation}
\label{EQ_FINAL_POLICY}
x_{t+1} = Ax_t + Bf_{t\ {\bf mod}\ k}(x_{(t \div k)k}) + w_t
\end{equation}
under this policy satisfies $\sup_{t\in\Nz}\EE_x\bigl[ \lnorm x_t \rnorm^2\bigr] < \infty$.
\end{lemma}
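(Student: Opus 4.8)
The plan is to pass to the sub-sampled process $v_n \Let x_{nk}$, $n\in\Nz$, and to exhibit a choice of $f_0,\ldots,f_{k-1}$ for which $(v_n)_{n\in\Nz}$ obeys \emph{exactly} the orthogonal random-walk recursion already treated in Lemma~\ref{LEMMA_RANDOMWALK_POLICY}. Iterating \eqref{EQ_FINAL_POLICY} across one block of length $k$ gives
\[
    v_{n+1} = A^k v_n + \sum_{i=0}^{k-1} A^{k-1-i} B\, f_i(v_n) + \tilde w_n, \qquad \tilde w_n \Let \sum_{i=0}^{k-1} A^{k-1-i} w_{nk+i},
\]
in which $A^k$ is again orthogonal. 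I would first check that $(\tilde w_n)_{n\in\Nz}$ satisfies Assumption~\ref{a:w}: it has zero mean and, since distinct blocks involve disjoint families of the $w_t$, the $\tilde w_n$ are mutually independent; orthogonality of $A$ together with the elementary bound $\bigl(\sum_{i=0}^{k-1} a_i\bigr)^4 \le k^3 \sum_{i=0}^{k-1} a_i^4$ yields $\EE\bigl[\norm{\tilde w_n}^4\bigr] \le k^4 C_4$, so the fourth moment is uniformly bounded.

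The key step is to use reachability to realise an arbitrary bounded additive correction in $\R^d$. Set $M \Let \bmat{cccc} A^{k-1}B & A^{k-2}B & \cdots & B \emat$; this is a column-block permutation of $\reachab_k$, hence $\rank M = \rank \reachab_k = d$, and $\sum_{i=0}^{k-1} A^{k-1-i} B f_i(v) = M\,\bigl[f_0(v)\transp\ \cdots\ f_{k-1}(v)\transp\bigr]\transp$. Fix any $r > k\,\sqrt[4]{C_4}$, which dominates $\sup_n \EE\bigl[\norm{\tilde w_n}\bigr]$, and let $\sigma_d$ be the least singular value of $M$. Because $\norm{-A^k\sat_r(v)} \le r$, Lemma~\ref{LEMMA_BASE} supplies blocks $f_0(v),\ldots,f_{k-1}(v)$ with $\norm{f_i(v)} \le r\sigma_d^{-1} \teL \rho$ such that $M\bigl[f_0(v)\transp\ \cdots\ f_{k-1}(v)\transp\bigr]\transp = -A^k \sat_r(v)$; concretely one takes the corresponding blocks of $M^+\bigl(-A^k\sat_r(v)\bigr)$, which makes each $f_i:\R^d\lra\R^m$ a fixed measurable map bounded by $\rho$. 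With this choice the sub-sampled recursion collapses to $v_{n+1} = A^k\bigl(v_n - \sat_r(v_n)\bigr) + \tilde w_n$, which is precisely the closed loop analysed in Lemma~\ref{LEMMA_RANDOMWALK_POLICY} with the orthogonal matrix $A^k$ and noise $(\tilde w_n)_{n\in\Nz}$; that lemma then gives $\sup_{n\in\Nz}\EE_x\bigl[\norm{x_{nk}}^2\bigr] < \infty$.

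It remains to propagate the bound from the sampling instants to all times. For $t = nk + j$ with $0\le j < k$, iterating \eqref{EQ_FINAL_POLICY} over the partial block and using orthogonality of $A$ gives $\norm{x_t} \le \norm{x_{nk}} + k\norm{B}\rho + \sum_{i=0}^{k-1}\norm{w_{nk+i}}$; squaring, taking $\EE_x[\cdot]$, and invoking $\sup_n\EE_x\bigl[\norm{x_{nk}}^2\bigr]<\infty$ together with $\sup_t\EE\bigl[\norm{w_t}^2\bigr]\le\sqrt{C_4}$ bounds $\EE_x\bigl[\norm{x_t}^2\bigr]$ uniformly in $t$, proving $\sup_{t\in\Nz}\EE_x\bigl[\norm{x_t}^2\bigr]<\infty$. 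I expect the substantive point to be the reduction itself---recognising that the $k$-step reachability condition is exactly what is needed to synthesise the full-dimensional additive feedback $-A^k\sat_r(\cdot)$ of the previous lemma out of the $m$-dimensional inputs---while the noise-aggregation moment estimate and the inter-sample interpolation are routine bookkeeping.
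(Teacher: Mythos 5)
Your proposal is correct and follows essentially the same route as the paper: sub-sample the dynamics over blocks of length $k$, use Lemma~\ref{LEMMA_BASE} on the (permuted) reachability matrix to realise the additive control $-A^k\sat_r(\cdot)$ with blocks bounded by $r\sigma_d^{-1}$, invoke Lemma~\ref{LEMMA_RANDOMWALK_POLICY} for the sub-sampled orthogonal system with the aggregated noise, and then interpolate the second-moment bound to the intermediate times. Your explicit verification of Assumption~\ref{a:w} for $(\tilde w_n)_{n\in\Nz}$ and the quantitative choice $r > k\sqrt[4]{C_4}$ are slightly more detailed than the paper's treatment, but the argument is the same.
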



\begin{proof}
Let $\tau\in\Nz$ and consider the evolution of \eqref{EQ_SYSTEM_WITH_B} from time $\tau k$ to time $(\tau+1)k$:
\begin{equation}
\label{EQ_SUBSAMPLED}
\begin{split}
x_{(\tau+1)k}
&= A^{k}\ x_{\tau k} +
\reachab_k
\begin{bmatrix}
u_{(\tau+1)k-1}\\ \vdots \\ u_{\tau k}
\end{bmatrix}
+ \sum_{i=0}^{k-1} A^{k-1-i} w_{\tau k + i}  = \bar{A} x_{\tau k} + \bar{u}_{\tau} + \tilde w_{\tau},
\end{split}
\end{equation}
where $\tilde w_\tau \Let \sum_{i=0}^{k-1} A^{k-1-i} w_{\tau k + i}$
is a random vector with mean zero and bounded fourth moment.
Since $\reachab_k$ has full rank, Lemma \ref{LEMMA_BASE} implies that for arbitrary $r>0$,
any $\bar{u}_\tau$ in $\ball_r$ can be expressed as $\bar{u}_\tau = \sum_{i=0}^{k-1} A^{k-1-i} B u_{\tau k + i}$,
where $\lnorm u_{\tau k + i} \rnorm\leq r\sigma_d^{-1}$ and
$\sigma_d$ is the smallest singular value of $\reachab_k$.
%
%
But from Lemma \ref{LEMMA_RANDOMWALK_POLICY} we know that there exists a particular $r > 0$ such that,
under the stationary policy $\bar u_\tau = f(x_{\tau k}) = -\bar{A} \sat_r(x_{\tau k})$,
the ``sub-sampled'' system \eqref{EQ_SUBSAMPLED}
has bounded second moment,
and $\lnorm \bar{u}_\tau \rnorm \leq r$.
Therefore, if we choose $\rho = r\sigma_d^{-1}$,
there exists a constant $c = c(x, C_1, C_4) > 0$ such that $\sup_{\tau\in\Nz}\EE_x\bigl[\norm{x_{\tau k}}^2\bigr] \le c$.
It follows from the system dynamics that for $n = 0, \ldots, k-1$,
\begin{align*}
	\EE_x\bigl[\norm{x_{\tau k + n}}^2\bigr] & \le 2\bigl(c +  n^2 r^2 \sigma_1(B)^2\bigl) + k \max_{n=0, \ldots, k-1}\trace Q_{\tau k + n}\\
	& \le 2\bigl(c +  n^2 r^2 \sigma_1(B)^2\bigl) + k \sqrt{C_4},
\end{align*}
where the last step follows from Jensen's inequality. Since the right-hand side above constitutes a uniform bound, this proves the assertion.
\end{proof}

\begin{remark}
\label{REMARK_POLICY}
The actual policy for \eqref{EQ_SYSTEM_WITH_B} is
$
\begin{bmatrix}
u_{(\tau +1)k-1}\\ \vdots\\ u_{\tau k}
\end{bmatrix}
= - \reachab_k^+ \bar{A} \sat_r(x_{\tau k}).
$
The proof above shows that all the inputs
$u_{(\tau+1)k-1}, \cdots, u_{\tau k}$ can be computed at time $\tau k$ in order to
counteract the future effect of the current state, i.e.\ $\bar{A} x_{\tau k}$,
and ignoring the effect of the noise for the following $k$ steps.
In the particular case when $B \in \R^{d\times d}$ has full rank, $m=d$, and obviously $k = 1$,
the above policy is {\em stationary}, and in particular it has the form:
$
u_t = f(x_t) = - B^{-1} A \sat_r(x_t).
$
Once again we have $\lnorm u_t \rnorm \leq r\sigma_d^{-1}$, where
this time $\sigma_d$ is the smallest singular value of $B$.
\RemarkEnd
\end{remark}

\subsection{Proof of Theorem \ref{MAIN_THEOREM}}

\begin{proof} 
Consider the system \eqref{STABLE_SYSTEM}, with $(A,B)$ stabilizable and
$(w_t)_{t\in\Nz}$ with bounded fourth moment.
If $A$ is Schur stable (that is, all the eigenvalues of $A$ belong to the interior of the unit disk),
the system with zero input has bounded second moment and is
asymptotically stable, and there is nothing to prove.
Otherwise, there exists a change of base in the state-space
that brings the original pair $(A, B)$
to a new pair $\bigl(\tilde A, \tilde B\bigr)$, where $\tilde A$ is in real
Jordan form \citep[p. 150]{ref:hornjohnson}.
In particular, choosing a suitable ordering of the Jordan blocks,
we can ensure that the pair $\bigl(\tilde A, \tilde B\bigr)$ has the form
$\left(\bigl[\begin{smallmatrix}A_{11} & 0\\ 0 & A_{22}\end{smallmatrix}\bigr], \bigl[\begin{smallmatrix}B_1\\ B_2\end{smallmatrix}\bigr]\right)$,
where $A_{11}$ is Schur stable,
and $A_{22}$ has its eigenvalues on the unit circle.
Due to the stability hypothesis
(the algebraic and geometric multiplicities of the eigenvalues of $A_{22}$ are equal),
$A_{22}$ is therefore block-diagonal with elements on the diagonal being either
$\pm 1$ or $2\times 2$ rotation matrices. As a consequence, $A_{22}$ is orthogonal.
Moreover, since $(A,B)$ is stabilizable, the pair $(A_{22}, B_2)$ must be reachable
in a number of steps $k \leq d$ which depends on the dimension of $A_{22}$
and the structure of $(A_{22}, B_2)$,
since it contains precisely the modes of $A$ which are not asymptotically stable.
Summing up, we can reduce the original system $x_{t+1} = A x_t + B u_t + w_t$ to the form
$
	\Bigl[\begin{smallmatrix} x^{(1)}_{t+1}\\ x^{(2)}_{t+1}\end{smallmatrix}\Bigr]
	= \Bigl[\begin{smallmatrix} A_{11} x^{(1)}_t\\ A_{22} x^{(2)}_t\end{smallmatrix}\Bigr]
	+ \Bigl[\begin{smallmatrix} B_1\\ B_2\end{smallmatrix}\Bigr] u_t
	+ \Bigl[\begin{smallmatrix} w^{(1)}_t\\ w^{(2)}_t\end{smallmatrix}\Bigr],
$
where $A_{11}$ is Schur stable, $A_{22}$ is orthogonal, $(A_{22}, B_2)$ is reachable, and
$\Bigl(\Bigl[\begin{smallmatrix} w^{(1)}_t\\ w^{(2)}_t\end{smallmatrix}\Bigr]\Bigr)_{t\in\Nz}$ is derived from
$(w_t)_{t\in\Nz}$ by means of linear transformations.
We know that since $A_{11}$ is Schur stable,
the noise $\bigl(w^{(1)}_t\bigr)_{t\in\Nz}$ has bounded second moment,
and the control inputs $(u_t)_{t\in\Nz}$ are bounded, then the $x^{(1)}$ sub-system
is mean-square bounded under any Markovian control~\citep[\S4]{ref:ChaHokLyg-09}.
Therefore, if under some bounded policy the $x^{(2)}$ sub-system is mean-square bounded,
the original system will also be mean-square bounded under the same policy.
Thus, at least for the proof of (P1), it suffices to restrict
our attention to the subsystem described by the pair $\bigl(A_{22}, B_2\bigr)$.
Suppose that this subsystem is reachable in a certain number $k\leq d$ of steps.

%

The proof of (P1) coincides with the proof of Lemma \ref{LEMMA_FINAL},
where we obtain $\rho = r\sigma_d^{-1}$ for $r > C_1$ and $\sigma_d$ is the smallest singular value of $\reachab_k$.
(Here, $\reachab_k = \bmat{cccc}B_2 & A_{22} B_2 & \cdots & A_{22}^{k-1} B_2\emat$.)
As the control authority required in the claim of the theorem, we choose precisely $R=\rho$.

To prove (P2), notice that for the closed-loop ``sub-sampled'' system without noise
under the policy $u_t = - \reachab_k^{+}\bar{A}\sat_r\bigl(x_t^{(2)}\bigr)$,
where $\bar{A} = A_{22}^k$,
it holds:
\begin{equation}
\label{EQ_SUBSAMPLED2}
x_{(\tau+1)k}^{(2)} = \bar{A} x_{\tau k}^{(2)} - \bar{A} \sat_r\bigl(x_{\tau k}^{(2)}\bigr).
\end{equation}
As long as $x_{\tau k}^{(2)}$ is outside $\ball_r$, $\lnorm x_{(\tau+1)k}^{(2)} \rnorm = \lnorm x_{\tau k}^{(2)} \rnorm - r$.
Hence, in a finite number of steps it must hold $\lnorm x_{\tau k}^{(2)} \rnorm <r$.
When for some $\bar{\tau}$ we have $\lnorm x_{(\bar{\tau}-1) k}^{(2)} \rnorm < r$, by the definition of
$\sat_r(\cdot)$ we have $x_{\bar{\tau} k}^{(2)} = 0$, and consequently $x_{\tau k}^{(2)} = 0$ for all $\tau \geq \bar{\tau}$.
Hence, the state of the closed-loop ``sub-sampled'' system converges to zero in {\em finite time}
for any initial condition.
Then, according to the chosen policy, for all $\tau \geq \bar{\tau}$ we have
$
\begin{bmatrix}
u_{(\tau +1)k-1}\\ \vdots\\ u_{\tau k}
\end{bmatrix}
= - \reachab_k^+ \bar{A} x_{\tau k}^{(2)} = 0
$ and $
\bar{u}_\tau =
\reachab_k
\begin{bmatrix}
u_{(\tau+1)k-1}\\ \vdots\\ u_{\tau k}
\end{bmatrix} = 0,
$
and consequently, for $\tau \geq \bar{\tau}$ and $\tau k \leq t < (\tau +1)k$ we also have $x_t^{(2)} = 0$, that is, $x_t^{(2)} = 0 \;\; \fa t \geq \bar{\tau}k$,
which proves (P2)
for the subsystem $\bigl(A_{22}, B_2\bigr)$ of our system \eqref{STABLE_SYSTEM}.

Finally, to extend the result (P2) to the general case (where $A=\diag(A_{11}, A_{22})$), it suffices to note
that, since for $t\geq \bar{\tau} k$ it also holds $u_t=0$, from the time $\bar{\tau} k$
onwards the subsystem $(A_{11}, B_1)$ is in open loop.
Since we imposed $A_{11}$ to be Schur stable, the state $x^{(1)}_t$ of the latter converges to zero
as $t\rightarrow\infty$.
This proves the theorem.
\end{proof}

\section{Numerical Example}
\label{SECTION_EXAMPLE}
An example follows, which shows that our nonlinear policy is readily computable, and 
effective in bounding the state of a stable linear system
in the mean square.
We executed $1000$ runs of simulation of the system $x_{t+1} = A x_t + B u_t + w_t$,
where $ 
A = \left[\begin{smallmatrix}
\cos{\varphi_1} & -\sin{\varphi_1} & 0 & 0 \\ 
\sin{\varphi_1} &  \cos{\varphi_1} & 0 & 0 \\ 
0               & 0                & 0.5 & 0\\ 
0               & 0                & 0 & 0.9
\end{smallmatrix}
\right]$, $
B = \left[\begin{smallmatrix}
1 \\
0 \\
0 \\
0 
\end{smallmatrix}\right]$, 
with $\varphi_1 = 0.8$, $x_0 = \bmat{cccc}10&20&30&40\emat^\top$, and where
$w_t$ is a Gaussian white noise with variance $I_4$.
This system is marginally stable and, as is easily seen, the $2$-dimensional
subsystem with eigenvalues on the unit circle is reachable in $2$ steps,
whereas the $2$-dimensional Schur-stable subsystem is not reachable at all.
The control authority $R$ was chosen approximately equal to $3.6$ according to a rough estimate
of $C_1 = \EE_x\bigl[\norm{w_t}\bigr]$.
It should be noticed that smaller values of $R$ are also sufficient to stabilize the system.
%
\begin{figure}[h]
\begin{center}
\vspace{-5mm}
\includegraphics[width=12cm]{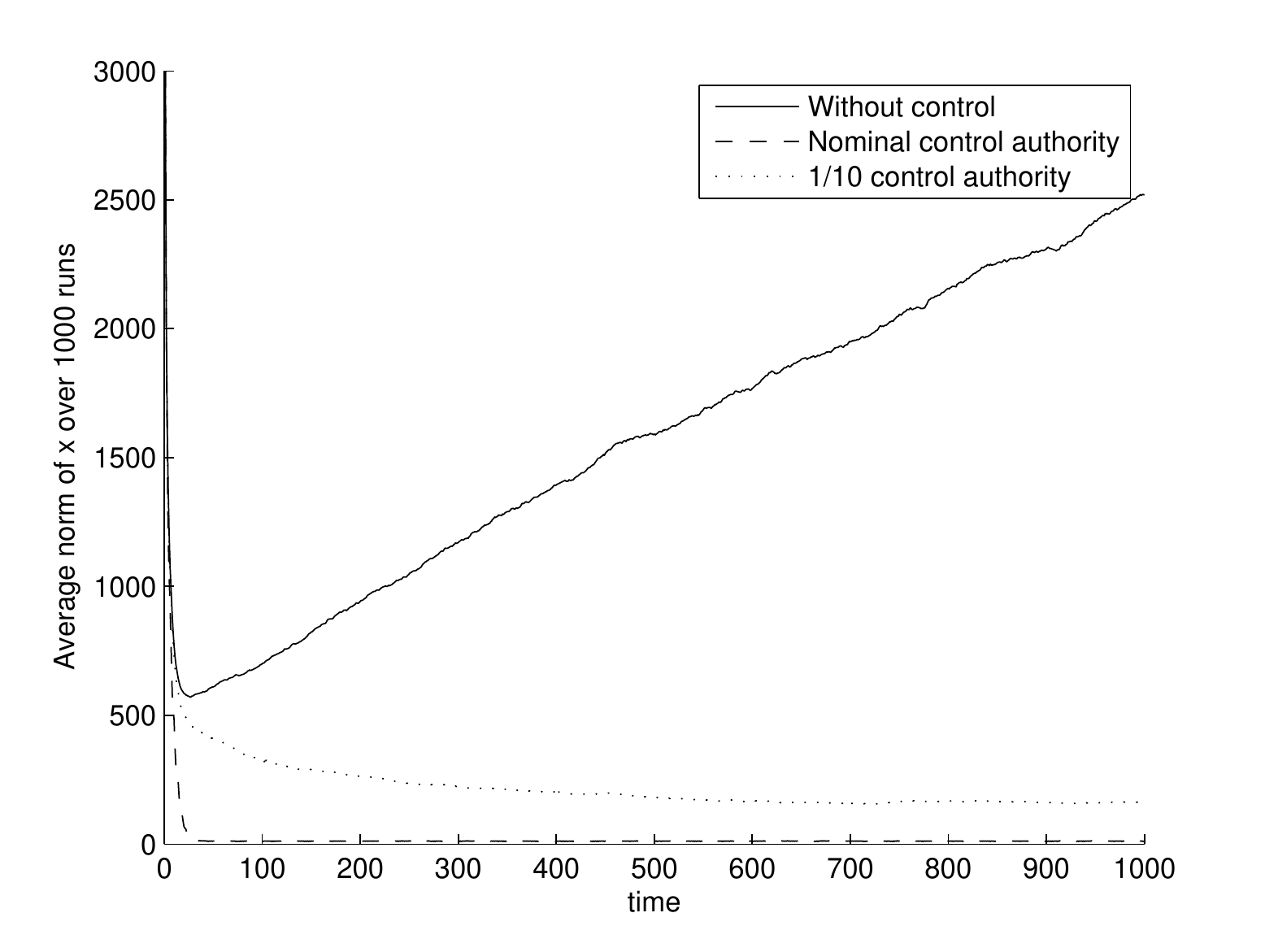}
\vspace{-10mm}
\caption{Empirical average of $||x_t||^2$ over $1000$ runs.}
\vspace{-5mm}
\label{fig:EXAMPLE}
\end{center}
\end{figure}
%
Figure \ref{fig:EXAMPLE} shows the empirical average of $||x_t||^2$ over the $1000$ runs,
respectively with disabled control, with the chosen control authority,
and with one tenth of the chosen control authority.
%

\section{A Conjecture}
\label{SECTION_CONCLUSION}

We conjecture that if the noise has bounded variance, then \emph{given any arbitrary positive uniform upper-bound} on the norm of the control, there exists a \emph{stationary feedback policy} such that the closed-loop system is mean-square bounded.  It appears to us that a proof of this conjecture will require substantially new and nontrivial techniques.



\end{document}